\def\BibTeX{{\rm B\kern-.05em{\sc i\kern-.025em b}\kern-.08em
    T\kern-.1667em\lower.7ex\hbox{E}\kern-.125emX}}
\newtheorem{definition}{Definition}[section]
\newtheorem{lemma}{Lemma}[section]
\newtheorem{theorem}{Theorem}[section]
\newtheorem{corollary}{Corollary}[section]
\begin{document}

\title{Block-missing data in linear systems: An unbiased stochastic gradient descent approach}

\author{
    \IEEEauthorblockN{Chelsea Huynh}
    \IEEEauthorblockA{Department of Mathematics\\ University of California, Irvine \\ Irvine, CA, USA}
    \and
    \IEEEauthorblockN{Anna Ma}
    \IEEEauthorblockA{Department of Mathematics\\ University of California, Irvine\\ Irvine, CA, USA}
    \and
    \IEEEauthorblockN{Michael Strand}
    \IEEEauthorblockA{Department of Statistics\\ University of California, Irvine\\ Irvine, CA, USA}
}

\maketitle

\begin{abstract}
Achieving accurate approximations to solutions of large linear systems is crucial, especially when those systems utilize real-world data. A consequence of using real-world data is that there will inevitably be missingness. Current approaches for dealing with missing data, such as deletion and imputation, can introduce bias. Recent studies proposed an adaptation of stochastic gradient descent (SGD) in specific missing-data models. In this work, we propose a new algorithm, $\ell$-tuple mSGD, for the setting in which data is missing in a block-wise, tuple pattern. We prove that our proposed method uses unbiased estimates of the gradient of the least squares objective in the presence of tuple missing data. We also draw connections between $\ell$-tuple mSGD and previously established SGD-type methods for missing data. Furthermore, we prove our algorithm converges when using updating step sizes and empirically demonstrate the convergence of $\ell$-tuple mSGD on synthetic data. Lastly, we evaluate $\ell$-tuple mSGD applied to real-world continuous glucose monitoring (CGM) device data.
\end{abstract}

\begin{IEEEkeywords}
 Stochastic Gradient Descent, Missing Data, Linear Systems
\end{IEEEkeywords}

\section{Introduction}

In today's data-driven world, analysts and scientists face one inevitable challenge when dealing with large-scale data -- real-world data goes missing. Current practices for handling missing data include deletion and imputation. However, these practices can reduce the statistical power of a study, produce bias, and lead to invalid conclusions when done naively. 

Extensive research has proposed sophisticated approaches for matrix completion and recovery that can address such issues around missing data trends and patterns. Some avenues can bypass the need to impute values altogether. For example, in a recent study from \cite{NMTMA-12-1}, Ma and Needell introduce an algorithm to circumvent the need to recover or impute the missing data when solving linear systems. Other works have extended their findings to a different and more robust version of what this algorithm accomplishes \cite{needell2014stochastic} in which, the authors propose an averaged stochastic gradient algorithm that adapts to data missing completely at random (i.e. each covariate has a different probability of being missing). The notion of structured, block-wise missingness in a features matrix is not a novel concept either. For example, Khayati, Cudré-Mauroux, and Böhlen \cite{Khayati2020} introduced an algorithm that considers variations in correlation when recovering missing blocks in time series data, while Xue and Qu \cite{blocks2} propose an imputation method for multi-source block-wise missing data. 

In real-world time series data, failures in sensors and communication can result in a loss of multiple consecutive data points, which creates structured ``blocks" or ``tuples" of missing data.  For example, suppose we have a data set where each row of the matrix corresponds to a sensor that sends information to a server to be processed into a predicted output value. Each column in the matrix corresponds to a record of information that the sensor is sending every 5 minutes (i.e. column 1 is the information sent at minute 5, column 2 is information sent at minute 10, etc.) If a sensor is broken for an extended period, multiple consecutive records will be missing in a block-like structure. Time series data can be used in the context of linear systems to predict certain metrics, but when data points are consecutively missing, one cannot expect to form accurate predictions.

In this paper, we provide an algorithm to estimate solutions to linear systems with block-structured missing data \textit{without} imputation. We propose an algorithm based on Stochastic Gradient Descent (SGD) with an update function that considers the system's structured block missingness and prove it's estimates are unbiased and that it will converge to a solution. Experiments are provided to demonstrate the performance of this algorithm in comparison to other methods of solving linear systems using synthetic data. Additionally, we utilize the proposed algorithm to solve a least squares problem using continuous glucose monitoring (CGM) data to demonstrate its applicability in a real-world setting.

\section{Background}

Before presenting our main contributions, we briefly overview SGD and the missing SGD (mSGD) algorithms. This is not an exhaustive literature review of both methods and is only meant to give the reader a sufficient background for the remainder of this paper. We refer the reader to~\cite{ruder2016overview, bottou2010large, needell2014stochastic} for more details on these methods.

\subsection{Stochastic Gradient Descent}

Consider a linear system $Ax_\star = y$ where $A \in \mathbb{R}^{m \times n}$ and $m \gg n$. The goal is to, given $A$ and $y$, find $x_\star$. SGD can be used to accomplish this task by minimizing the least squares objective function 
\begin{align*}
    F(x) := \frac{1}{2m}||Ax - y||^2 = \frac{1}{m}\sum_{i=1}^m f_i(x),
\end{align*}
through iterative updates $x_{k+1} = x_k -\alpha_k \nabla f_{i} (x_k)$. Here, $\nabla f_{i} (x_k)$ denotes the evaluation at $x_k$ of the gradient of $f_{i}$ with respect to $x$, $\alpha_k$ is the step size at the $k^{th}$ iteration, and $i$ is randomly selected according to some distribution (e.g., uniformly over all $i = 1, 2,..., m$).  The algorithm minimizes $F(x)$ using unbiased estimates of the gradient, i.e., $f_i(x)$ such that $\mathbb{E}\left[\nabla f_i(x)\right] = \nabla F(x)$ at every iteration. 

\subsection{SGD for Missing Data}
\label{ssec:msgd}

Conventional optimization methods may not produce accurate solutions if data is absent from the matrix $A$. In particular, standard applications of SGD to least squares problems assume that $A$ is fully available, i.e., no data is missing. 

Instead of naively imputing, estimating, or deleting values, Ma and Needell \cite{NMTMA-12-1} propose the mSGD method to solve linear systems when data is assumed to be missing completely at random. In particular, their approach assumes the availability of the entries of $A$ can be modeled as i.i.d Bernoulli random variables with probability of being present $p$. Under this assumption, an adapted SGD update function can be furnished with a correction term to ensure iterates move in the direction of the gradient of the least squares objective in expectation. The algorithm proposed in \cite{NMTMA-12-1} is $x_{k+1} = x_k - \alpha_k h(x_k)$, with 
\begin{align}
\label{eq:msgd} 
    h(x) := \frac{1}{p^2}(\tilde{A}_i^T (\tilde{A}_i x - p y_i)) - \frac{1-p}{p^2} \text{diag}(\tilde{A}_i^T \tilde{A}_i) x,
\end{align}
where $\tilde{A}$ denotes a matrix containing only accessible data (data that is not missing from $A$). Note that the latter part of \Cref{eq:msgd} is the ``correction" term which, informally, takes into account the fact that there are missing entries, and when $p=1$ we recover the original SGD iterate.

\section{Main method}
\label{sec:main}

Let $A^{(\ell)}_{i,j}$ for $i = 1,2,..., m$ and $j = 1,2,...,n\slash \ell$ denote the $j^{th}$ tuple of size $\ell$ in row $i$ where the tuple contains elements of columns $(j-1)\ell + 1$ to $j\ell$. With this notation, we now establish a rigorous definition for our structured missing data model.

\begin{definition}[$\ell$-tuple missingness]
    \label{def:tuple}
     Let $A \in \mathbb{R}^{m\times n}$ be a fixed matrix and $\ell$ be a positive, non-zero integer that divides $n$. Consider the $i^{th}$ row of ${A}$ partitioned into $n/\ell$ sections, or `tuples', of length $\ell$. The incomplete realization of $A$, denoted $\tilde{A}$, has ``$\ell$-tuple missing data" or ``tuple-missing data" if for $i = 1,2,..., m$ and $j = 1,2,...,n\slash\ell$, $\tilde{A}^{(\ell)}_{i, j}$ are i.i.d. Bernoulli random variables with 
    \begin{equation}
        \tilde{A}_{i,j}^{(\ell)} = \begin{cases}
        A_{i,j}^{(\ell)} &\text{with probability } p\\
        0^{(\ell)} & \text{with probability }1-p,
        \end{cases}
    \label{eq:binary_mask}
    \end{equation}
    where $A^{(\ell)}_{i,j}$ is the $j^{th}$ tuple of $A_i$ and $0^{(\ell)}$ is a tuple of zeroes.
\end{definition}

In the missing $\ell$-tuples model, missingness occurs across rows, independently at random in $\ell$ consecutive entries simultaneously. This definition is amendable to matrices $A$ with infinite rows, as would be the case in a streaming data setting. However, for the remainder of this paper, we assume a finite-row setting for the sake of simplicity and assume $A\in\mathbb{R}^{m\times n}$ for $n,m< \infty$. It is also straightforward to extend the missing tuples model and our algorithm to allow for different tuple lengths and probabilities within a single row but, again for simplicity, we will only consider the case where $\ell$ and $p$ are fixed over $A$.
\Cref{fig:tuple} is a visual example of the tuple-missing model. 

\begin{figure}[!h]
    \centering
    \includegraphics[scale=0.6]{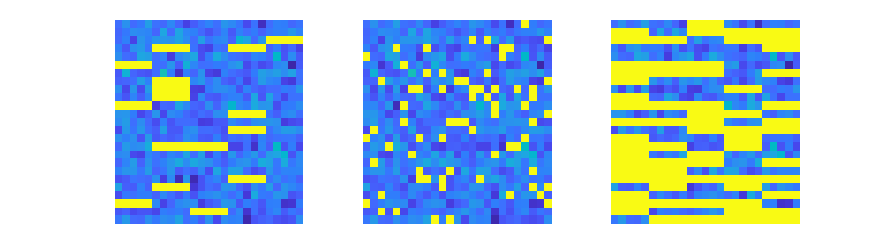}
    \caption{A heat map of the same matrix with different types of tuple-missing data. Here, yellow spaces indicate missing information. Every natural number $\ell \mid n$ is a valid length under the missing-tuples model and we make the distinction that $\ell = 1$ is synonymous with i.i.d Bernoulli missing individual entries, as is the case in the mSGD model. \textbf{Left:} $\ell = 5, p = 0.9$ \textbf{Center:} $\ell = 1, p = 0.9$, \textbf{Right:} $\ell = 5, p = 0.5$.}
    \label{fig:tuple}
\end{figure}

We model the matrix structure of tuple-missingness through a \textit{binary mask} $D\in\{0,1\}^{m \times n}$, which we use to specify the tuple-missingness of a fixed matrix $\tilde{A}$ through $\tilde{A} = D \,\odot A$ where $\odot$ is the Hadamard, or element-wise product. In other words, $D$ indicates the positions of missing data in $A$, with zeroes, and observed data, with ones. 

\noindent\subsection{Main Result}

We now turn towards the construction of a stochastic iterative algorithm to approximate solutions to tuple-missing systems. Suppose that we want to find the least squares solution to the linear system $A x_\star =y$ but only have access to an incomplete realization of $A$, $\tilde{A}$, which has tuple-missing data as reflected in \Cref{def:tuple}. In such a case, we consider the objective function:
\begin{align*}
    \tilde{F}(x) := \frac{1}{2m}||\tilde{A}x-y||^2 = \frac{1}{m}\sum_{i=1}^m\tilde{f}_i(x),
\end{align*}
and
\begin{align*}
    \tilde{f}_i(x) = \frac{1}{2}\left(\tilde{A}_ix-y_i\right)^2,
\end{align*}
for which $y_i,\tilde{A}_i$ is the $i^{th}$ element of $y$ and the $i^{th}$ row of $\tilde{A}$, respectively. Naively applying SGD to the above objective is unlikely to result in the least squares solution when using the completed matrix $A$. One will find taking the expectation of $\nabla\tilde{f}_i$ with respect to the binary mask $D$ and the uniformly chosen row $i$ does not yield iterates that move in the direction steepest descent when the matrix contains tuple-missing data. That is to say, if we denote the expectation with respect to all possible binary masks $D$ as $\mathbb{E}_\delta$ and for the random row choice $\mathbb{E}_i$, then, assuming these two sources of randomness are independent,
\begin{align*}
   \mathbb{E}\left[\nabla\tilde{f}_i\right] =  \mathbb{E}_{i,\delta}\left[\nabla\tilde{f}_i\right] = \mathbb{E}_i\mathbb{E}_\delta\left[\nabla\tilde{f}_i\right] \neq \nabla F(x).
\end{align*}
However, our proposed algorithm $\ell$-tuple mSGD adds a correction term to the SGD iterate which accounts for the tuple missing data structure. In our proposed algorithm we take the step update $x_{k+1} = x_k - \alpha_k h_\ell(x_k)$, for some learning rate $\alpha_k$ and
\begin{align}
    h_\ell(x) := \frac{1}{p^2}\left(\tilde{A}_i^{T}\left(\tilde{A}_i x - p y_i\right)\right) -\frac{1-p}{p^2} \,\, L \odot \tilde{A}_i^{T}\tilde{A}_i x.\label{eq:hx}
\end{align}
In \Cref{eq:hx}, $L$ is a fixed matrix, dependant only on the tuple length $\ell$, which captures the tuple structure of the matrix outer product $\tilde{A}_i^T\tilde{A}_i$. We define $L$ as the $n \times n$ matrix of all zeroes except for the $\ell\times\ell$ block-diagonal of ones. The matrix $L$ works within $h_\ell(x)$ to adjust the expectation of the iterates back in the direction of the objection function specifically according to the influence of the tuples model.

\begin{theorem}\label{thm:main}
    Let $Ax_\star = y$ be a consistent overdetermined linear system. Assume that $y \in \mathbb{R}^m$ is given but $A \in \mathbb{R}^{m \times n}$ is only partially known. In particular, suppose that one only has access to $\tilde{A}$ as defined in \Cref{def:tuple}. Denoting the least squares objective $F(x) = \frac{1}{2m}\| Ax - y\|^2$ we have that:
    \begin{displaymath}
        \mathbb{E}[h_\ell(x)]= \nabla F(x),
    \end{displaymath}
    where $h_\ell(x)$ is as defined in \Cref{eq:hx} and the expectation is taken with respect to data missingness and random row selection. 
\end{theorem}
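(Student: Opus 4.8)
The plan is to peel off the two independent sources of randomness one at a time, treating the mask first. Since the row index $i$ and the mask are assumed independent, $\mathbb{E} = \mathbb{E}_i\mathbb{E}_\delta$, and since $\nabla F(x) = \frac{1}{m}A^T(Ax - y) = \mathbb{E}_i[A_i^T(A_i x - y_i)]$, it suffices to prove the conditional identity $\mathbb{E}_\delta[h_\ell(x)] = A_i^T(A_i x - y_i)$ for each fixed row $i$; averaging over the uniform choice of $i$ then yields the theorem. Note that consistency of $Ax_\star = y$ is not actually needed here — only the formula $\nabla F(x) = \frac{1}{m}A^T(Ax - y)$ is used.

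To evaluate $\mathbb{E}_\delta$ I would express the mask through the underlying tuple indicators of \Cref{def:tuple}: let $\delta_{i,1},\dots,\delta_{i,n/\ell}$ be the i.i.d.\ $\mathrm{Bernoulli}(p)$ variables, and write $j(k) = \lceil k/\ell\rceil$ for the index of the tuple containing column $k$, so that the $k$-th entry of $\tilde A_i$ is $\delta_{i,j(k)}A_{i,k}$. The linear part is then immediate from linearity of expectation: $\mathbb{E}_\delta[\tilde A_i^T y_i] = p\,A_i^T y_i$, so $\mathbb{E}_\delta[\tfrac1p\,\tilde A_i^T y_i] = A_i^T y_i$.

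The crux is the quadratic term $\mathbb{E}_\delta[\tilde A_i^T\tilde A_i]$, whose $(k,k')$ entry is $\mathbb{E}[\delta_{i,j(k)}\delta_{i,j(k')}]\,A_{i,k}A_{i,k'}$. Here the tuple structure enters through a dichotomy: if columns $k$ and $k'$ lie in the \emph{same} tuple then $j(k) = j(k')$ and $\delta_{i,j(k)}\delta_{i,j(k')} = \delta_{i,j(k)}^2 = \delta_{i,j(k)}$ has mean $p$; if they lie in \emph{different} tuples, independence gives mean $p^2$. Since $L$ is precisely the $0$–$1$ matrix carrying ones on same-tuple index pairs, this says exactly
\begin{align*}
    \mathbb{E}_\delta[\tilde A_i^T\tilde A_i] = p^2\, A_i^T A_i + p(1-p)\, L\odot(A_i^T A_i).
\end{align*}
Hadamard-multiplying by $L$ and using the idempotence $L\odot L = L$ (so that $L\odot(A_i^T A_i)$ is fixed by $L\odot(\cdot)$) gives $\mathbb{E}_\delta[L\odot\tilde A_i^T\tilde A_i] = p\, L\odot(A_i^T A_i)$. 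Substituting both expressions into \Cref{eq:hx}: the term $\tfrac{1}{p^2}\tilde A_i^T\tilde A_i x$ contributes $A_i^T A_i x + \tfrac{1-p}{p}\,L\odot(A_i^T A_i)x$ in expectation, which the correction term $-\tfrac{1-p}{p^2}\,L\odot\tilde A_i^T\tilde A_i x$ — equal to $-\tfrac{1-p}{p}\,L\odot(A_i^T A_i)x$ in expectation — cancels exactly, leaving $\mathbb{E}_\delta[h_\ell(x)] = A_i^T A_i x - A_i^T y_i = A_i^T(A_i x - y_i)$. Averaging over $i$ completes the argument.

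The main obstacle is the quadratic-term bookkeeping: correctly separating intra-tuple column pairs — which pick up only a single factor of $p$ because they share one Bernoulli variable — from inter-tuple pairs — which pick up $p^2$ — and recognizing that this split is precisely what the matrix $L$ encodes. Everything else (linearity of expectation, the identity $L\odot L = L$, and the averaging over rows) is routine. It is worth recording two sanity checks along the way: the independence of the row draw from the mask is what licenses $\mathbb{E} = \mathbb{E}_i\mathbb{E}_\delta$, and when $\ell = 1$ the matrix $L$ collapses to the identity, so $L\odot \tilde A_i^T\tilde A_i = \diag(\tilde A_i^T\tilde A_i)$ and $h_\ell$ reduces to the mSGD update $h$ of \Cref{eq:msgd}.
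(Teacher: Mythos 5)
Your proposal is correct and follows essentially the same route as the paper: both arguments hinge on computing $\mathbb{E}_\delta[\tilde A_i^T\tilde A_i] = p^2 A_i^TA_i + p(1-p)\,L\odot(A_i^TA_i)$ by distinguishing same-tuple column pairs (scaled by $p$) from different-tuple pairs (scaled by $p^2$), and then using that $L\odot(\cdot)$ fixes $L\odot(A_i^TA_i)$ so the correction term cancels the excess. Your version merely conditions on the row first and makes the entrywise Bernoulli-indicator bookkeeping and the idempotence $L\odot L = L$ explicit, which the paper leaves implicit.
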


\begin{proof}
    We start with the expectation of \Cref{eq:hx}:
    \begin{align}
        &\mathbb{E}\left[h_\ell(x)\right]\nonumber\\
        &= \mathbb{E}_i\mathbb{E}_\delta\left[\frac{1}{p^2}\left(\tilde{A}_i^{T}\left(\tilde{A}_i x - p y_i\right)\right) -\frac{1-p}{p^2} \,\, L \odot \tilde{A}_i^{T}\tilde{A}_i x\right]\nonumber\\
        &= \frac{1}{p^2}\mathbb{E}_i\mathbb{E}_\delta\left[\tilde{A}_i^{T}\tilde{A}_i x\right] - \frac{1}{p}\mathbb{E}_i\mathbb{E}_\delta\left[\tilde{A}^T_iy_i\right]\nonumber\\ 
        &- \frac{1-p}{p^2}\mathbb{E}_i\mathbb{E}_\delta\left[L \odot \tilde{A}_i^{T}\tilde{A}_i x\right] .\label{eq:full_exp}
    \end{align}
    Now, we evaluate the three expectations comprising \Cref{eq:full_exp}. 
    
    Firstly, we view the outer product of $\tilde{A}_i$ with itself as a matrix of outer products between constituent tuples, $\left(\tilde{A}_{i,j}^{(\ell)}\right)^T\tilde{A}_{i,k}^{(\ell)}\in\mathbb{R}^{\ell\times \ell}$ for $j,k = 1,2,...,n/ \ell$. The resulting expectation of this outer product with respect to the missingness has off-block diagonals which are scaled with a factor of $p^2$ and the block diagonal matrices are scaled by a factor of $p$. In particular, the support of the matrix $L$ corresponds to the elements in which $\mathbb{E}_\delta\left[\tilde{A}_i^T\tilde{A}_i\right]$ is scaled by a factor of $p$. One can also obtain the support of the elements that are scaled by a factor of $p^2$ by taking $\mathbb{1} - L$ where $\mathbb{1}$ denotes a $n \times n$ ones matrix. 
    \begin{align}
        &\frac{1}{p^2}\mathbb{E}_i\mathbb{E}_\delta\left[\tilde{A}_i^{T}\tilde{A}_i x\right]\nonumber\\
        &= \frac{1}{p^2}\mathbb{E}_i\left[p^2(\mathbb{1} - L)\odot A^T_iA_ix + pL\odot A^T_iA_ix\right]\nonumber\\
        &= \left(\frac{1}{p^2m}\right)\sum_{i=1}^mp^2A^T_iA_i x - p^2 L\odot A^T_iA_ix + pL\odot A^T_iA_ix\nonumber\\
        &= \left(\frac{1}{p^2m}\right)\left(p^2A^TAx + (p-p^2)L\odot A^TAx\right). \label{eq:outer_exp_taken}
    \end{align}
    For the second term of the expectation in \Cref{eq:full_exp}, it is a simple calculation to see that $\mathbb{E}_\delta\left[\tilde{A}^T_i\right] = pA^T_i$ which implies
    \begin{align}
         \frac{1}{p}\mathbb{E}_i\mathbb{E}_\delta\left[\tilde{A}^T_iy_i\right] &= \frac{1}{p}\mathbb{E}_i\left[pA^T_iy_i\right] = \frac{1}{m}\sum_{i=1}^mA^T_iy_i\nonumber\\  
         &= \frac{1}{m}A^Ty.\label{eq:trans_exp_taken}
     \end{align}
    Using \Cref{eq:outer_exp_taken} and \Cref{eq:trans_exp_taken} we complete \Cref{eq:full_exp} as follows:
    \begin{align}
        &\frac{1}{p^2}\mathbb{E}_i\mathbb{E}_\delta\left[\tilde{A}_i^{T}\tilde{A}_i x\right] - \frac{1}{p}\mathbb{E}_i\mathbb{E}_\delta\left[\tilde{A}^T_iy_i\right] - \frac{1-p}{p^2}\mathbb{E}_i\mathbb{E}_\delta\left[L \odot \tilde{A}_i^{T}\tilde{A}_i x\right] \nonumber\\%\label{eq:proof_finish}\\
        &= \left(\frac{1}{p^2m}\right)\left(p^2A^TAx-p^2L\odot A^TAx + pL\odot A^TAx\right)- \frac{1}{m}A^Ty\nonumber\\
        &- \left(\frac{1-p}{p^2m}\right) L\odot \left(p^2A^TAx-p^2L\odot A^TAx + pL\odot A^TAx\right)\nonumber\nonumber\\
        &= \left(\frac{1}{m}\right)\left(A^TAx - A^Ty\right)\nonumber\\
        &= \nabla F(x).\nonumber
    \end{align}
    Thus, we have that $ \mathbb{E}\left[h_\ell(x)\right] = \nabla F(x)$.
\end{proof}

Convergence of our algorithm for an updating step size can be shown by adopting a similar argument as was used in \cite{NMTMA-12-1} and the additional assumption that iterates are projected onto some convex domain $\mathcal{W}$ containing the solution $x_\star$ each step. 

We begin by citing a result from \cite{lem1} which guarantees convergence of SGD when using unbiased estimates of the gradient under some assumptions on the objective. 
\begin{theorem}[\hspace{-0.03ex}\cite{lem1}, Theorem 1] 
    \label{thm:bound}
    Let $F(x)$ be $\mu$-strongly convex and $g(x)$ a function having the properties $\mathbb{E}[g(x)] = \nabla F(x)$ and $\mathbb{E}\left[\left\| g(x) \right\|^2\right] \leq G$ for all $x \in \mathcal{W}$. If step size $\alpha_k = \frac{1}{\mu k }$ is chosen to update $x_{k+1} = \mathcal{P}_\mathcal{W}\left(x_k -\alpha_k g(x_k)\right)$, it follows that 
    \begin{align*}
        \mathbb{E}\left[F(x_{k+1}) - F(x_\star)\right] \leq \frac{17G(1+\log{(k))}}{\mu k},
    \end{align*}
    where $\mathcal{P}_\mathcal{W}$ is a projection function onto $\mathcal{W}$.
\end{theorem}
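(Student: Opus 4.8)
This statement is quoted directly from \cite{lem1}, so within the paper one simply cites it; nonetheless, here is how I would reconstruct its proof. The plan is to derive a one-step recursion for the squared error $\mathbb{E}\|x_k - x_\star\|^2$ and then post-process it, using $\mu$-strong convexity and the choice $\alpha_k = 1/(\mu k)$, into a bound on the optimality gap $\mathbb{E}[F(x_{k+1}) - F(x_\star)]$.

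First I would use that the Euclidean projection $\mathcal{P}_\mathcal{W}$ onto the convex set $\mathcal{W}\ni x_\star$ is non-expansive, so that
\begin{align*}
    \|x_{k+1} - x_\star\|^2 &\le \|x_k - \alpha_k g(x_k) - x_\star\|^2\\
    &= \|x_k - x_\star\|^2 - 2\alpha_k\langle g(x_k), x_k - x_\star\rangle + \alpha_k^2\|g(x_k)\|^2.
\end{align*}
Taking the conditional expectation given $x_k$ and invoking $\mathbb{E}[g(x_k)] = \nabla F(x_k)$ and $\mathbb{E}[\|g(x_k)\|^2]\le G$ gives
\begin{align*}
    \mathbb{E}\|x_{k+1} - x_\star\|^2 \le \mathbb{E}\|x_k - x_\star\|^2 - 2\alpha_k\,\mathbb{E}\langle\nabla F(x_k), x_k - x_\star\rangle + \alpha_k^2 G.
\end{align*}

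Next I would apply $\mu$-strong convexity in the form $\langle\nabla F(x_k), x_k - x_\star\rangle \ge F(x_k) - F(x_\star) + \frac{\mu}{2}\|x_k - x_\star\|^2$, rearrange to isolate $\mathbb{E}[F(x_k) - F(x_\star)]$, and substitute $\alpha_k = 1/(\mu k)$, which yields a per-step inequality of the form
\begin{align*}
    \mathbb{E}[F(x_k) - F(x_\star)] \le \frac{\mu(k-1)}{2}\mathbb{E}\|x_k - x_\star\|^2 - \frac{\mu k}{2}\mathbb{E}\|x_{k+1} - x_\star\|^2 + \frac{G}{2\mu k}.
\end{align*}
Summing this over a suffix of the iterations and using the careful averaging/reweighting bookkeeping of \cite{lem1} (or, equivalently, an induction on a suitably weighted potential) collapses the distance terms and leaves a residual of order $\sum_{j\le k}1/j = O(\log k)$, which is what produces the claimed bound $\frac{17G(1+\log k)}{\mu k}$; the explicit constant $17$ is an artifact of that bookkeeping.

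The main obstacle is precisely this last step. Naive term-by-term telescoping fails because the distance terms carry the growing weights $\mu(k-1)/2$ and $\mu k/2$, so consecutive terms do not cancel, and bounding them crudely destroys the $1/k$ rate; obtaining the optimality gap for the actual iterate $x_{k+1}$ (rather than for a running average) is what forces the more delicate argument of \cite{lem1}. Once \Cref{thm:bound} is available, applying it to $\ell$-tuple mSGD is routine: \Cref{thm:main} supplies the unbiasedness hypothesis with $g = h_\ell$, $\mu$-strong convexity of $F$ holds whenever $A$ has full column rank, and the bound $\mathbb{E}\|h_\ell(x)\|^2\le G$ holds on any bounded convex domain $\mathcal{W}$ since $h_\ell$ is continuous in $x$ and the randomness ranges over only finitely many masks and rows.
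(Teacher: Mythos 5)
The paper does not prove \Cref{thm:bound} at all: it is imported verbatim from \cite{lem1} and used as a black box, so your decision to treat it as a citation is exactly what the paper does, and there is no internal proof to compare against. Your reconstruction of the external argument is the standard one and its first steps (non-expansiveness of $\mathcal{P}_\mathcal{W}$, conditioning, unbiasedness, strong convexity, substituting $\alpha_k = 1/(\mu k)$) are correct, but one claim in your discussion of the final step is wrong: the per-step inequality you derive telescopes perfectly, since the term $-\tfrac{\mu k}{2}\mathbb{E}\|x_{k+1}-x_\star\|^2$ produced at step $k$ is exactly cancelled by the term $+\tfrac{\mu k}{2}\mathbb{E}\|x_{k+1}-x_\star\|^2$ produced at step $k+1$. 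Summing is therefore not the obstacle; it cleanly gives $\sum_{j=1}^{k}\mathbb{E}[F(x_j)-F(x_\star)] \le \tfrac{G}{2\mu}\sum_{j=1}^{k} j^{-1}$ and hence an $O(G\log k/(\mu k))$ bound for the \emph{averaged} iterate. The genuine difficulty, which you do correctly name in your last clause, is that the theorem asserts the bound for the final iterate $x_{k+1}$; that requires the suffix-averaging argument of \cite{lem1} (bounding the gap at $x_{k+1}$ by averages over geometrically shrinking suffixes), and that bookkeeping is where the constant $17$ arises. Your remark about how the theorem is then applied to $h_\ell$ is consistent with the paper, though the paper prefers the explicit bound $G$ of \Cref{lem:G} over a compactness argument.
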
  

\begin{corollary}
    \label{thm:convergence}
    Let $\tilde{A}$ be the tuple-missing analog of $A$ according to \Cref{def:tuple}, $\mu$ be the strong convexity constant of the objective function $F(x)$, and $G$ be the uniform upper bound of $\mathbb{E}[\| h_\ell(x)\|^2]$. Then choosing step size $\alpha_k = \frac{1}{\mu k}$, the $\ell$-tuple mSGD algorithm converges with expected error
    \begin{align*}
        \mathbb{E}\left[\left\| x_{k+1} - x_\star\right\|^2\right] \leq \frac{17G(1 + \log{(k)})}{\mu^2 k}.
    \end{align*}
\end{corollary}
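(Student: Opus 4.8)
The plan is to obtain \Cref{thm:convergence} as a direct consequence of \Cref{thm:bound} applied with $g = h_\ell$, after checking that the three hypotheses of that theorem hold in our setting, and then to convert the resulting bound on objective values into a bound on the squared iterate error by invoking strong convexity of $F$.

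First I would verify the hypotheses of \Cref{thm:bound}. Strong convexity of $F(x) = \frac{1}{2m}\|Ax-y\|^2$ holds as soon as $A$ has full column rank (which is the generic situation for the overdetermined systems under consideration), since then $\nabla^2 F = \frac{1}{m}A^TA$ is positive definite; we take $\mu = \lambda_{\min}\!\left(\frac{1}{m}A^TA\right) > 0$, exactly the constant in the statement. The unbiasedness requirement $\mathbb{E}[h_\ell(x)] = \nabla F(x)$ for every $x$ is precisely \Cref{thm:main}. It remains to produce the uniform second-moment bound $\mathbb{E}[\|h_\ell(x)\|^2] \le G$ for $x \in \mathcal{W}$: because $h_\ell(x)$ is an affine function of $x$ whose random coefficients take only finitely many values (one outcome per pair of row index $i$ and mask $D$), the map $x \mapsto \mathbb{E}[\|h_\ell(x)\|^2]$ is a finite, nonnegative quadratic polynomial in $x$ and hence continuous; restricting the iterates through the projection $\mathcal{P}_\mathcal{W}$ onto a compact convex set $\mathcal{W} \ni x_\star$ makes this quantity bounded on $\mathcal{W}$, and we let $G$ denote any such bound.

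Next I would apply \Cref{thm:bound} with $g = h_\ell$ and step size $\alpha_k = \frac{1}{\mu k}$ to get
\begin{align*}
    \mathbb{E}\left[F(x_{k+1}) - F(x_\star)\right] \le \frac{17G(1+\log(k))}{\mu k}.
\end{align*}
To pass to the iterates, note that $x_\star$ minimizes $F$, so $\nabla F(x_\star) = 0$; substituting $x = x_\star$, $y = x_{k+1}$ into the strong-convexity inequality $F(y) \ge F(x) + \nabla F(x)^T(y-x) + \tfrac{\mu}{2}\|y-x\|^2$ gives $F(x_{k+1}) - F(x_\star) \ge \tfrac{\mu}{2}\|x_{k+1} - x_\star\|^2$. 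Taking expectations and combining with the previous display yields
\begin{align*}
    \mathbb{E}\left[\|x_{k+1} - x_\star\|^2\right] \le \frac{2}{\mu}\,\mathbb{E}\left[F(x_{k+1}) - F(x_\star)\right] \le \frac{34G(1+\log(k))}{\mu^2 k},
\end{align*}
which is the claimed rate; the stated constant $17$ corresponds to the strong-convexity normalization $F(y) - F(x) - \nabla F(x)^T(y-x) \ge \mu\|y-x\|^2$ used in the cited reference, and the two conventions differ only by this harmless factor of two.

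The main obstacle is the last part of the verification: unlike in the full-data or deterministic case, $\mathbb{E}[\|h_\ell(x)\|^2]$ is genuinely unbounded over all of $\mathbb{R}^n$ (it grows quadratically in $\|x\|$), so the hypothesis $\mathbb{E}[\|h_\ell(x)\|^2] \le G$ of \Cref{thm:bound} fails globally. This is exactly why the projected variant $x_{k+1} = \mathcal{P}_\mathcal{W}(x_k - \alpha_k h_\ell(x_k))$ with a bounded convex $\mathcal{W} \ni x_\star$ is assumed; confirming that such a $\mathcal{W}$ exists and that the projection step does not destroy the unbiasedness needed along the trajectory — it does not, since \Cref{thm:main} holds pointwise in $x$ — is the only content beyond invoking the cited convergence theorem.
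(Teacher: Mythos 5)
Your proposal is correct and follows the same overall route as the paper: both reduce \Cref{thm:convergence} to \Cref{thm:bound} and then verify its hypotheses (strong convexity of $F$ with $\mu = \lambda_{\min}(A^TA)/m$, unbiasedness via \Cref{thm:main}, and a uniform second-moment bound on $h_\ell$ over $\mathcal{W}$). The one substantive difference is how the second-moment bound is obtained: you argue softly that $x\mapsto\mathbb{E}\left[\|h_\ell(x)\|^2\right]$ is a nonnegative quadratic polynomial (a finite mixture over rows and masks of squared affine maps) and hence bounded on a compact $\mathcal{W}$, whereas the paper proves \Cref{lem:G}, which produces the explicit constant $G =\frac{2B}{mp^3}\sum_i\left\| A_{i}\right\|^4 + \frac{2}{mp}\sum_iy_i^2\left\| A_i\right\|^2$ with $B = \max_{x\in\mathcal{W}}\|x\|^2$. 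Your argument suffices for the corollary as literally stated, since there $G$ is simply \emph{defined} to be the uniform bound, but the explicit form is what makes the rate quantitative (e.g.\ it exposes the $p^{-3}$ dependence on the missingness level); note also that both your compactness argument and the paper's $B$ silently require $\mathcal{W}$ to be bounded, not merely convex. Finally, you carry out explicitly the step the paper omits entirely: converting $\mathbb{E}\left[F(x_{k+1})-F(x_\star)\right]$ into $\mathbb{E}\left[\|x_{k+1}-x_\star\|^2\right]$ via $F(x_{k+1})-F(x_\star)\ge\tfrac{\mu}{2}\|x_{k+1}-x_\star\|^2$. Your bookkeeping there is right and worth flagging: under the standard normalization $\nabla^2 F\succeq\mu I$ this conversion costs a factor of $2/\mu$, so the constant in the displayed bound should be $34$ rather than the stated $17$ unless one adopts the alternative strong-convexity convention you describe; this is a genuine (if harmless) discrepancy in the corollary as printed.
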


The convergence of our algorithm is guaranteed by Theorem~\ref{thm:convergence} once we show that the assumptions are satisfied. Since Theorem~\ref{thm:main} guarantees unbiased estimates of $\nabla F(x)$ and $F(x)$ is $\mu$-strongly convex where $\mu$ is the square of the smallest singular value of $A$ divided by $m$~ (as shown, for e.g., in \cite{NMTMA-12-1}), it remains to prove that the expected squared norm of the $h_\ell(x)$ is uniformly bounded.
 
\begin{lemma}
    \label{lem:G}
    $\mathbb{E}\left\| h_\ell(x) \right\|^2 \leq G$, where
    \begin{align*}
        G =\frac{2B}{mp^3}\sum_i\left\| A_{i}\right\|^4 + \frac{2}{mp}\sum_iy_i^2\left\| A_i\right\|^2
    \end{align*}
    and $B = \underset{x \in \mathcal{W}}{\max}\left\|x\right\|^2$.
\end{lemma}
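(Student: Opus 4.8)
The plan is to reduce $\|h_\ell(x)\|^2$ to quantities of the form $\|\tilde{A}_i\|^2$ and $\|\tilde{A}_i\|^4$, whose expectations under the tuple mask are easy to evaluate, while using the block structure of $L$ to keep the prefactors from degrading. First I would regroup the update \Cref{eq:hx} as $h_\ell(x) = M_i x - \tfrac{1}{p}\tilde{A}_i^T y_i$, where $M_i := \tfrac{1}{p^2}\bigl(\tilde{A}_i^T\tilde{A}_i - (1-p)\, L\odot \tilde{A}_i^T\tilde{A}_i\bigr)$, and apply the elementary bound $\|a-b\|^2 \le 2\|a\|^2 + 2\|b\|^2$ to obtain $\|h_\ell(x)\|^2 \le 2\|M_i x\|^2 + \tfrac{2}{p^2}\|\tilde{A}_i^T y_i\|^2$. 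The second term is handled immediately, since $\|\tilde{A}_i^T y_i\|^2 = y_i^2\,\|\tilde{A}_i\|^2$.

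For the first term I would bound $\|M_i x\| \le \|M_i\|_F\,\|x\| \le \sqrt{B}\,\|M_i\|_F$ on $\mathcal{W}$ and then estimate $\|M_i\|_F$. The key observation is that $L\odot \tilde{A}_i^T\tilde{A}_i$ is precisely the $\ell\times\ell$ block-diagonal part of the rank-one matrix $\tilde{A}_i^T\tilde{A}_i$, so the matrix $N_i := p^2 M_i = \tilde{A}_i^T\tilde{A}_i - (1-p)\,L\odot\tilde{A}_i^T\tilde{A}_i$ agrees entrywise with $\tilde{A}_i^T\tilde{A}_i$ off the block-diagonal and equals $p$ times $\tilde{A}_i^T\tilde{A}_i$ on the block-diagonal. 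Because $p \le 1$, squaring and summing entries gives $\|N_i\|_F^2 \le \|\tilde{A}_i^T\tilde{A}_i\|_F^2 = \|\tilde{A}_i\|^4$, hence $\|M_i x\|^2 \le \tfrac{B}{p^4}\|\tilde{A}_i\|^4$.

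It then remains to take expectations. Fixing a row $i$ and writing $c_j = \|A^{(\ell)}_{i,j}\|^2$, with $\delta_{i,j}\sim\mathrm{Bernoulli}(p)$ the i.i.d.\ tuple indicators so that $\|\tilde{A}_i\|^2 = \sum_j \delta_{i,j} c_j$, I would compute $\mathbb{E}_\delta\|\tilde{A}_i\|^2 = p\sum_j c_j = p\|A_i\|^2$, and, using $\mathbb{E}[\delta_{i,j}\delta_{i,k}] = p$ when $j=k$ and $p^2$ otherwise, $\mathbb{E}_\delta\|\tilde{A}_i\|^4 = p^2\|A_i\|^4 + p(1-p)\sum_j c_j^2 \le p\|A_i\|^4$, the last step using $\sum_j c_j^2 \le \bigl(\sum_j c_j\bigr)^2 = \|A_i\|^4$. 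Substituting these into the two-term bound gives $\mathbb{E}_\delta\|h_\ell(x)\|^2 \le \tfrac{2B}{p^3}\|A_i\|^4 + \tfrac{2}{p}\,y_i^2\|A_i\|^2$, and averaging over the uniformly chosen row index $i$ yields exactly the claimed constant $G$.

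I expect the main obstacle to be the two block-structure facts: (i) that Hadamard-masking by $L$ does not increase the Frobenius norm in the estimate for $\|M_i\|_F$ — this is what prevents the $p^{-2}$ prefactor from becoming $p^{-3}$ there — and (ii) the fourth-moment estimate $\mathbb{E}_\delta\|\tilde{A}_i\|^4 \le p\|A_i\|^4$, which is the step that actually produces the correct power $p^{-3}$ in $G$ rather than $p^{-4}$. Everything else is routine norm bookkeeping, together with the convexity and projection setup already assumed for \Cref{thm:convergence}.
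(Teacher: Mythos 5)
Your proposal is correct and follows essentially the same route as the paper: the same $\|a-b\|^2\le 2\|a\|^2+2\|b\|^2$ split isolating the $y_i$ term, the same Frobenius-norm argument showing the $L$-masked correction does not increase $\|\tilde{A}_i^T\tilde{A}_i\|_F=\|\tilde{A}_i\|^4$, and the same tuple-wise second- and fourth-moment computations giving $\mathbb{E}_\delta\|\tilde{A}_i\|^2=p\|A_i\|^2$ and $\mathbb{E}_\delta\|\tilde{A}_i\|^4\le p\|A_i\|^4$. Your explicit entrywise description of $N_i$ (equal to $\tilde{A}_i^T\tilde{A}_i$ off the block-diagonal and $p$ times it on the block-diagonal) is a slightly more detailed justification of the Frobenius bound the paper states without elaboration, but the argument is the same.
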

\begin{proof}
    \small
    \begin{align}
        &\mathbb{E}\left[\left\| h_\ell(x)\right\|^2\right]\nonumber\\ &= \mathbb{E}_{i,\delta}\left[\left\| \frac{1}{p^2}\left(\tilde{A}_i^T(\tilde{A}_ix - py_i) - (1-p)L\odot (\tilde{A}_i^T\tilde{A}_i)x\right)\right\|^2\right]\nonumber\\
        &\leq \frac{2}{p^4}\mathbb{E}_{i,\delta}\left\| \tilde{A}_i^T\tilde{A}_ix - (1-p)L\odot (\tilde{A}_i^T\tilde{A}_i)x\right\|^2 + \frac{2}{p^4}\mathbb{E}_{i,\delta}\left\| py_i\tilde{A}^T_i\right\|^2\nonumber\\
        &\leq \frac{2}{p^4}\mathbb{E}_{i,\delta}\left\| \tilde{A}_i\right\|^4\left\| x \right\|^2 + \frac{2}{p^2}\mathbb{E}_{i,\delta}\left\| y_i\tilde{A}_i\right\|^2.\label{eq:G_tot}
    \end{align}
    \normalsize
    Above we used the fact that
    \begin{align*}
        &\left\|  \tilde{A}_i^T\tilde{A}_i - (1-p)L\odot\tilde{A}_i^T\tilde{A}_i\right\|\\ &\leq      \left\|  \tilde{A}_i^T\tilde{A}_i - (1-p)L\odot\tilde{A}_i^T\tilde{A}_i\right\|_F\\ 
        &\leq \left\| \tilde{A}_i^T\tilde{A}_i\right\|_F = \left\| \tilde{A}_i\right\|^2
    \end{align*}
    where $\left\| \cdot \right\|_F$ denotes a Frobenius norm operation. We complete the bound by taking the expectations of these norms. To begin, we first observe that the square norm of a row $A_i$ can be written as the sum of square tuple-norms:
    \begin{align}
        \frac{2}{p^2}\mathbb{E}_{i,\delta}\left\| y_i\tilde{A}_i\right\|^2  &=  \frac{2}{p^2}\mathbb{E}_{i,\delta}\left[y_i^2\sum_{j=1}^{n\slash\ell}\left\| \tilde{A}_{i,j}^{(\ell)}\right\|^2\right]\nonumber\\
        &=  \frac{2}{p^2}\mathbb{E}_i\left[y_i^2\sum_{j=1}^{n\slash\ell}\mathbb{E}_\delta\left\| \tilde{A}_{i,j}^{(\ell)}\right\|^2\right]\nonumber\\
        &= \frac{2}{p}\mathbb{E}_i\left[y_i^2\sum_{j=1}^{n\slash\ell}\left\| A_{i,j}^{(\ell)}\right\|^2\right]\nonumber\\
        &= \frac{2}{p}\mathbb{E}_i\left[y_i^2\left\| A_i\right\|^2\right]\nonumber\\
        &=\frac{2}{mp}\sum_iy_i^2\left\| A_i\right\|^2,\label{eq:part1}
    \end{align}
    since $\left\| \tilde{A}_{i,j}^{(\ell)}\right\|^2$ is the norm of the $j^{th}$ tuple $A_{i,j}^{(\ell)}$ of the original matrix $A$ with probability $p$ or $0$ otherwise. The other term follows similarly by expanding the square and applying linearity of expectation:
    \begin{align*} 
        &\mathbb{E}_{i,\delta}\left\| \tilde{A}_i\right\|^4 = \mathbb{E}_{i,\delta}\left[\sum_{j=1}^{n\slash\ell}\left\| \tilde{A}_{i,j}^{(\ell)}\right\|^2\right]^2\\
        &= \mathbb{E}_i\left[\sum_{j=1}^{n\slash\ell}\mathbb{E}_\delta\left\| \tilde{A}_{i,j}^{(\ell)}\right\|^4 + \sum_{j\neq k}^{n\slash\ell}\mathbb{E}_\delta\left(\left\| \tilde{A}_{i,j}^{(\ell)}\right\|^2\left\| \tilde{A}_{i,k}^{(\ell)}\right\|^2\right)\right]\\
        &=\mathbb{E}_i\left[\sum_{j=1}^{n\slash\ell}\mathbb{E}_\delta\left\| \tilde{A}_{i,j}^{(\ell)}\right\|^4 + \sum_{j\neq k}^{n\slash\ell}\mathbb{E}_\delta\left\| \tilde{A}_{i,j}^{(\ell)}\right\|^2\mathbb{E}_\delta\left\| \tilde{A}_{i,k}^{(\ell)}\right\|^2\right].
    \end{align*}
    The last step follows from the independence assumed between different tuples. Then, by taking the expectation with respect to the binary masks, we obtain
    \begin{align}
        &\mathbb{E}_i\left[p\sum_{j=1}^{n\slash\ell}\left\| A_{i,j}^{(\ell)}\right\|^4 + p^2\sum_{j\neq k}^{n\slash\ell}\left\| A_{i,j}^{(\ell)}\right\|^2\left\| A_{i,k}^{(\ell)}\right\|^2\right]\nonumber\\
        &\leq p\mathbb{E}_i\left[\sum_{j=1}^{n\slash\ell}\left\| A_{i,j}^{(\ell)}\right\|^4 + \sum_{j\neq k}^{n\slash\ell}\left\| A_{i,j}^{(\ell)}\right\|^2\left\| A_{i,k}^{(\ell)}\right\|^2\right]\nonumber\\
        &= p\mathbb{E}_i\left\| A_{i}\right\|^4\nonumber\\
        &= \frac{p}{m}\sum_i\left\| A_{i}\right\|^4.\label{eq:part2}
    \end{align}
    The final result is obtained from substituting \Cref{eq:part1} and \Cref{eq:part2} into \Cref{eq:G_tot} and bounding $x\in\mathcal{W}$.
\end{proof}

\subsection{Relation to mSGD}
\label{ssec:relation_msgd}

Taking the tuple size $\ell = 1$, we exactly recover the mSGD update function $h(x)$ from \Cref{eq:msgd}. i.e., $h_1(x) = h(x)$. Furthermore, when $p$ is fixed, we expect the same amount of missingness (total entries missing) in the i.i.d tuple-missing model for any $\ell$ as we do in the mSGD model with i.i.d Bernoulli missing individual entries. Despite this, knowing the structure of the missingness \textit{does} give an advantage in solving systems. To see this, consider the difference in the expectation of the $\ell$-tuple mSGD and mSGD update, \Cref{eq:hx} and \Cref{eq:msgd} respectively, under the tuple-missing model:
\begin{align*}
    &\mathbb{E}_{i,\delta}\left[h_\ell(x) - h(x) \right]\\ &= \mathbb{E}_{i,\delta}\left[-\frac{1-p}{p^2} \,\, L \odot \tilde{A}_i^{T}\tilde{A}_i x+\frac{1-p}{p^2} \,\,\text{diag}\left( \tilde{A}_i^{T}\tilde{A}_i \right)x \right]\nonumber\\
    &= (1-p)A^TA\odot(I - L)x,\label{eq:bias}
\end{align*}
where $I$ is the $n\times n$ identity matrix. If $\ell = 1$ then $L = I$ and the above is $0$. We consider $ (1-p)A^TA\odot(I - L)$ to be a bias term introduced by the tuple structure which is not removed by mSGD.

It was shown in \cite{NMTMA-12-1} that mSGD recovers SGD when $p = 1$. The same is true for $\ell$-tuple mSGD. When $p = 1$, $\tilde{A}_i = A_i$ and $\dfrac{1-p}{p^2} \,\, L \odot \tilde{A}_i^{T}\tilde{A}_i x = 0$, such that $h_\ell(x)$ becomes $A_i^T\left(A_ix - y_i\right)$; the familiar SGD update. 

\section{Experimental results}
\label{sec:experiments}

The following experiments demonstrate the behavior of $\ell$-tuple mSGD on real and synthetic data. In order to simulate missingness according to Definition~\ref{def:tuple}, the complete data matrix $A$ is available in each of our experiments and has standard Gaussian distributed entries. Tuple-missing data is simulated by generating a new binary mask row each iteration to generate $\tilde{A}_i = D_i\odot A_i$. We report $\left\| x_{k} - x_\star \right\|^2$ as the error. 

\begin{figure}[!h]
    \includegraphics[scale = 0.6]{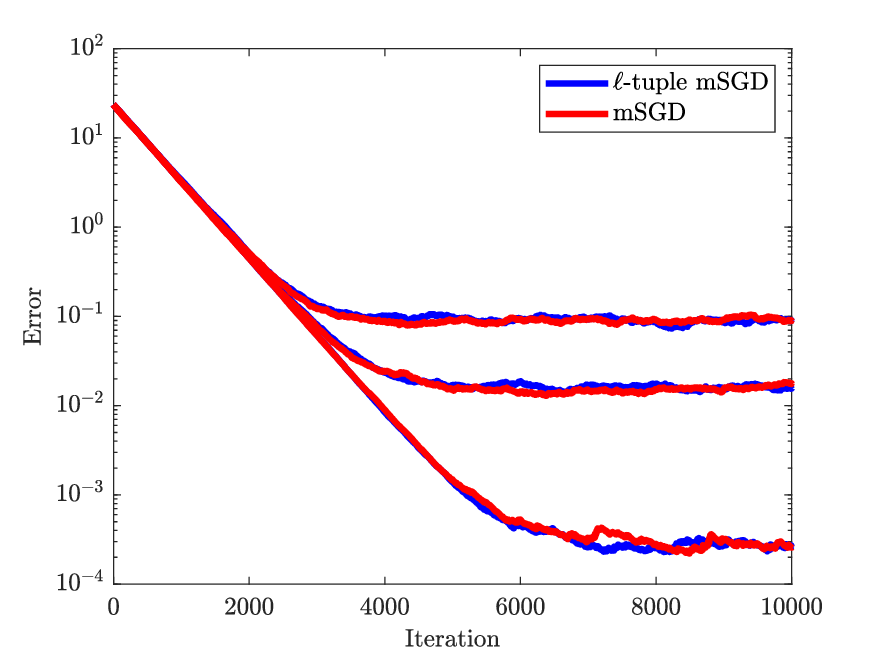}
    \caption{A comparison of mSGD and $\ell$-tuple mSGD for when $\ell = 1$ and for varying $p = 0.8, 0.95$ and $0.999$. Predictably, we see lower errors for higher values of $p$ in each algorithm.  Under these conditions, $p$ has the same meaning in the context of each separate algorithm.}
    \label{fig:ell_equal_1}
\end{figure}

In the first experiment, we compare mSGD to $\ell$-tuple mSGD in a sanity check where we speculate that both algorithms will produce similar errors given the correct chosen tuple length. In order to match the $\ell$-tuple mSGD model to mSGD's model, we must set $\ell=1$. In this case, the $L$ matrix is the identity matrix, implying $L\odot \tilde{A}_i^T\tilde{A}_i = \text{diag}\left(\tilde{A}_i^T\tilde{A}_i\right)$. For this experiment, $A \in \mathbb{R}^{m \times n}$ where $m = 10,000$ and $n = 25$. \Cref{fig:ell_equal_1} displays 20-experiment averaged results for increasing $p$ with fixed step size $\alpha_k = 10^{-3}$ and, as predicted, the results and behaviors of both algorithms are comparable.

\begin{figure}[!h]
    \includegraphics[scale = 0.6]{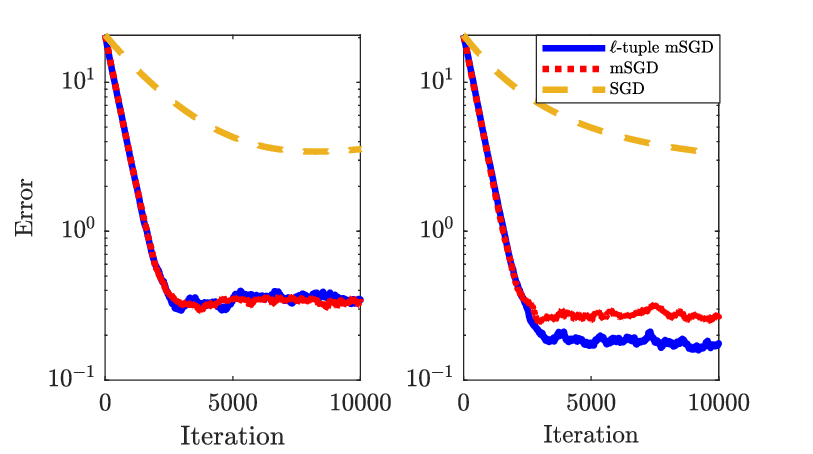}
    \caption{A comparison of $\ell$-tuple mSGD, mSGD, and SGD on different length tuple missing systems. \textbf{Left: } $\ell = 2$. \textbf{Right: } $\ell = 15$.}
    \label{fig:3_alg_comp}
\end{figure}

\Cref{fig:3_alg_comp} compares the average of 20 experiments of $\ell$-tuple mSGD against mSGD and SGD on $\tilde{A}$ having $m=8,000$ rows and $n=30$ columns for fixed $p=0.6$ and with $\ell$-tuple missing data for tuple length parameter $\ell\in \{2, 15\}$. With the binary mask drawn to fit the $\ell$-tuple-missingness model described in \Cref{sec:main}, we can see that mSGD and $\ell$-tuple mSGD achieves a smaller convergence horizon than SGD. Thus, mSGD and $\ell$-tuple mSGD are more desirable when finding approximate solutions to random missing data and structured random missing data, respectively. Furthermore, we observe the resulting difference in error convergence horizons due to the bias term mentioned in \Cref{ssec:relation_msgd} which is greater for larger values of $\ell$.

\begin{figure}[!h]
    \includegraphics[scale=0.6]{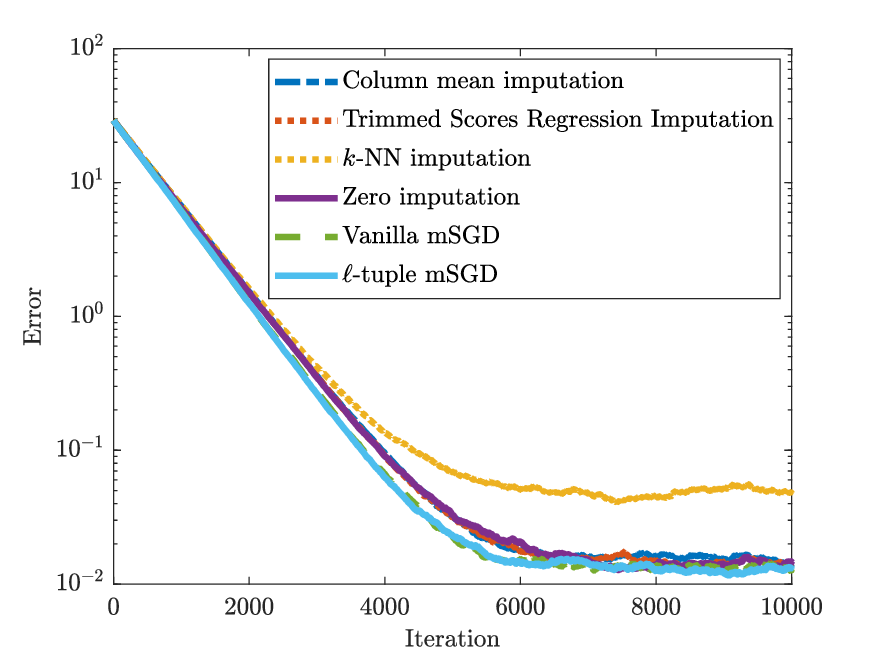}
    \caption{Comparison of convergence of other missing data methods and $\ell$-tuple mSGD in the presence of tuple missing data.}
    \label{fig:missing_data_comps}
\end{figure}

\Cref{fig:missing_data_comps} compares $\ell$-tuple mSGD with other popular imputation and missing data methods on tuple-missing data averaged over 20 experiments. Each of the algorithms performs SGD after applying its missing data correction. For example, column means imputes the column averages from non-missing entries in the same column. $k$-NN imputes the average of the 5 nearest non-missing neighbors to missing entries (under the assumption that columns of $A$ follow the same distribution). Trimmed scores regression\footnote{Using the Matlab package Missing Data Imputation Toolbox \cite{tsr}.} calculates imputations through the regression equation and a trimmed-scores matrix for the non-missing entries. Here, we see that $\ell$-tuple mSGD is able to obtain a convergence horizon comparable to other methods. While this experiment was only performed on a synthetic linear system, this result shows that, under the right conditions, $\ell$-tuple mSGD fits in with other contemporary missing data methods in reducing error. To produce this figure, we generated $A \in \mathbb{R}^{m \times n}$ such that $m = 10,000, n = 100, \ell = 50$, $p = 0.95$ and $\alpha_k = 8\cdot 10^{-4}$.

\section{Application - \textit{Continuous Glucose Monitoring}}
\label{sec:cgm}

We now present an application of $\ell$-tuple mSGD in predicting patient blood glucose levels using continuous glucose monitoring (CGM) device data. CGM is used in medical devices commonly worn by diabetic patients to monitor their blood glucose in real-time. It has been claimed to be beneficial in the early detection and prevention of hypoglycemic events \cite{cgmclaim}. While biometric data is continuously collected, the invasive nature of some wearable CGM devices creates the potential for noise \cite{cgmdatapaper} in the sensor readings, which lessens the accuracy in estimating the patient's glucose at these points in time. If CGM data points are considered ``missing" according to a specified acceptable noise threshold, then we hypothesize that $\ell$-tuple mSGD would make more accurate glucose level predictions than SGD or mSGD in this context.

For the following application, we used biometric data \cite{cgmdata} collected over three consecutive days from a single patient to construct a system of equations $Cx = g$ in which each of the resulting $365$ rows of $C$ contains 5 minutes of sequential groups of feature readings, taken each second, that correspond to one glucose level reading in $g$. In this setting, $C$ has a tuple structure where $\ell$, the tuple length, is the fixed number of biometric features per reading. If we use a noise-level-based missingness threshold, then this missingness takes on the same tuple structure, and we can use $\ell$-tuple mSGD to find the best value for $x$. The procedure and results of constructing $C$ in this way are available in our \href{https://github.com/mstrand1/l-tuple-mSGD}{GitHub repository}\footnote{https://github.com/mstrand1/l-tuple-mSGD}.

\begin{figure}[!h]
    \includegraphics[scale=0.6]{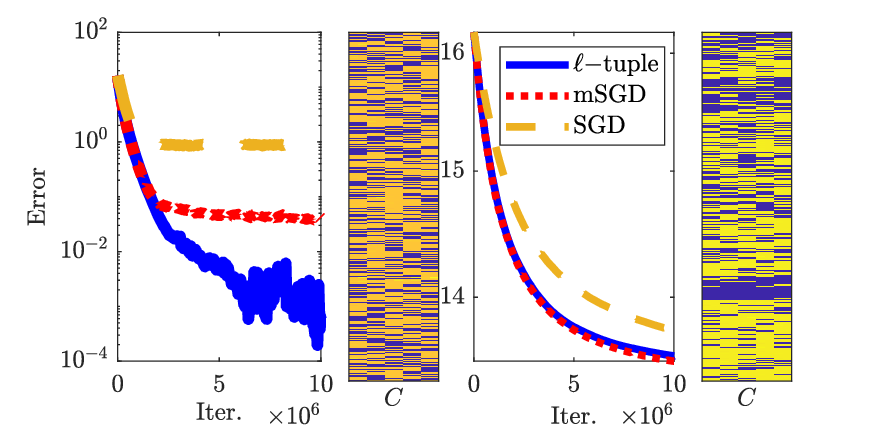}
    \caption{Comparison of $\ell$-tuple mSGD, mSGD, and SGD on the CGM data system $Cx_\star = \hat{g}$ for $C\in\mathbb{R}^{365\times 10}$ with $\ell = 2$ and a noise-threshold that rejects about 40\% of data as ``too-noisy". Here, $C$ has $10$ columns since we choose 5 readings of 2 features every 5 minutes. Two ways of simulating missingness are compared and visualized to the right of their respective error plots: \textbf{Left:} Synthetic i.i.d Bernoulli tuple missing \textbf{Right:} Noise-threshold missing.}
    \label{fig:cgm}
\end{figure}

In \Cref{fig:cgm}, we see the results of using two biometric features, ``ECGWaveform" and ``ECGAmplitude"\footnote{The choice of these electrocardiogram (ECG) variables is motivated by previous works in predicting hypoglycemia using biosensor data collected by wearable devices \cite{cgmdl, cgmecg}.}, with 5 readings each per every 5 minutes to populate $C$ and $\ell$-tuple mSGD, mSGD, and SGD to approximate a vector $x$ which minimizes glucose level prediction error. Noise thresholds are determined by each reading's corresponding ``ECGnoise" variable. To satisfy the assumption that the given linear system is consistent, we use the data-dependent right side vector $\hat{g} = CC^\dag g$. Using $\hat{g}$ instead of $g$ ensures that the system $Cx = \hat{g}$ has a solution. When applying these methods to the linear system $Cx = \hat{g}$ with noise-threshold missing data, we found that all three methods behave similarly and attain the same convergence horizon \Cref{fig:cgm} (right). We conjecture that the noise from the inconsistent system may be greater than the noise due to missing data and is thus the dominating factor in the empirical behavior of these iterative methods. We leave exploring the trade-off and connection between noise and missing data for future work. 

\section{Conclusions}
\label{sec:conc}

In this work, we proposed an SGD-type algorithm that utilized unbiased estimates of the gradient of the least squares objective in the presence of tuple missing data. We proved that, given an updating step size, the algorithm will converge to an error horizon bound by some constant.

The experiments on real and synthetic data support our claims and show empirically that $\ell$-tuple mSGD converges to error horizons comparable to naively-applied contemporary missing data methods such as imputation and mSGD in cases where $\ell > 1$. Therefore, $\ell$-tuple mSGD proves to be an effective choice for solving linear systems with missing data without introducing bias or expensive imputation procedures to minimize approximation errors.

\section*{Acknowledgment} We would like to thank Meha Patel, Sam Rath, and Chupeng Zheng for their involvement and collaboration in facilitating the ideas and conversations around this project. 

\bibliographystyle{ieeetr}
\bibliography{ref}

\end{document}